\newtheorem{theorem}{Theorem}
\newtheorem{corollary}{Corollary}
\newtheorem{lemma}{Lemma}
\newtheorem{remark}{Remark}
\begin{document}

\title{\Large A lower bound of Ruzsa's number related to the\\
Erd\H{o}s-Tur\'{a}n conjecture}
\author{\large Csaba S\'andor$^{1}$\footnote{
Email:~csandor@math.bme.hu. This author was supported by the OTKA
Grant No. K109789. This paper was supported by the J\'anos Bolyai
Research Scholarship of the Hungarian Academy of Sciences.}
~~Quan-Hui Yang$^{2}$\footnote{Email:~yangquanhui01@163.com. This
author was supported by the National Natural Science Foundation
for Youth of China, Grant No. 11501299, the Natural Science
Foundation of Jiangsu Province, Grant Nos. BK20150889,~15KJB110014
and the Startup Foundation for Introducing Talent of NUIST, Grant
No. 2014r029.} }
\date{} \maketitle
 \vskip -3cm
\begin{center}
\vskip -1cm { \small 1. Department of Stochastics, Budapest
University of Technology and Economics, H-1529 B.O. Box, Hungary}
 \end{center}

 \begin{center}
{ \small 2. School of Mathematics and Statistics, Nanjing University of Information \\
Science and Technology, Nanjing 210044, China }
 \end{center}

\begin{abstract} For a set $A\subseteq \mathbb{N}$ and $n\in \mathbb{N}$, let $R_A(n)$ denote the number of ordered pairs $(a,a')\in A\times A$ such that $a+a'=n$.  The
celebrated Erd\H{o}s-Tur\'{a}n conjecture says that, if $R_A(n)\ge
1$ for all sufficiently large integers $n$, then the
representation function $R_A(n)$ cannot be bounded. For any
positive integer $m$, Ruzsa's number $R_m$ is defined to be the
least positive integer $r$ such that there exists a set
$A\subseteq \mathbb{Z}_m$ with $1\le R_A(n)\le r$ for all $n\in
\mathbb{Z}_m$. In 2008, Chen proved that $R_{m}\le 288$ for all
positive integers $m$. In this paper, we prove that $R_m\ge 6$ for
all integers $m\ge 36$. We also determine all values of $R_m$ when
$m\le 35$.

{\it 2010 Mathematics Subject Classification:} Primary
11B34,11B13.

{\it Keywords and phrases:}  Representation function, Ruzsa's
number, Erd\H{o}s-Tur\'{a}n conjecture

\end{abstract}

\section{Introduction}

Let $\mathbb{N}$ be all nonnegative integers. For any set
$A,B\subseteq \mathbb{N}$, let
$$R_{A,B}(n)=\sharp \{(a,b):~ a\in A,~b\in B,~a+b=n\}. $$
Let $R_A(n)=R_{A,A}(n)$. If $R_A(n)\ge 1$ for all sufficiently
large integers $n$, then we say that $A$ is a basis of
$\mathbb{N}$. The celebrated Erd\H{o}s-Tur\'{a}n conjecture
\cite{erdosturan} states that if $A$ is a basis of $\mathbb{N}$,
then $R_A(n)$ cannot be bounded. Erd\H{o}s \cite{erdos} proved
that there exists a basis $A$ and two constants $c_1,c_2> 0$ such
that $c_1 \log n \leq R_A(n) \leq c_2 \log n$ for all sufficiently
large integers $n$. Recently, Dubickas \cite{Dubickas} gave the
explicit values of $c_1$ and $c_2$. In 2003, Nathanson
\cite{nathanson} proved that the Erd\H{o}s-Tur\'{a}n conjecture
does not hold on $\mathbb{Z}$. In fact, he proved that there
exists a set $A\subseteq \mathbb{Z}$ such that $1\leq R_A(n) \leq
2$ for all integers $n$. In the same year, Grekos et al.
\cite{Grekos} proved that if $R_A(n)\geq 1$ for all $n$, then
$\limsup_{n\rightarrow
 \infty}R_A(n)\geq 6.$ Later, Borwein et al. \cite{Borwein} improved 6 to
 8. In 2013, Konstantoulas \cite{Konstantoulas} proved that if the upper density
 $\overline{d}(\mathbb{N}\setminus (A+A))$ of the set of numbers
 not represented as sums of two numbers of $A$ is less than
 $1/10$, then $R_A(n)>5$ for infinitely many natural numbers $n$.
 Chen \cite{chen12} proved that there exists a basis $A$ of
 $\mathbb{N}$ such that the set of $n$ with $R_A(n)=2$ has density
 one. Later, the second author \cite{yang} and Tang \cite{tang15}
 generalized Chen's result. For the analogue of Erd\H{o}s-Tur\'an conjecture in groups, one can refer
 to \cite{Haddad},~\cite{Haddad08} and \cite{Konyagin}.

For a positive integer $m$, let $\mathbb{Z}_m$ be the set of
residue classes mod $m$. For $A,B\subseteq \mathbb{Z}_m$, let
$R_{A,B}(n)$ be the number of solutions of equation $a+b=n,~a\in
A,~b\in B$. Let $R_A(n)=R_{A,A}(n)$. If $R_A(n)\ge 1$ for all
$n\in \mathbb{Z}_m$, then $A$ is called an additive basis of
$\mathbb{Z}_m$.

In 1990, Ruzsa \cite{ruzsa} found a basis $A$ of $\mathbb{N}$ for
which $R_A(n)$ is bounded in the square mean. Ruzsa's method
implies that there exists a constant $C$ such that for any
positive integer $m$, there exists an additive basis $A$ of
$\mathbb{Z}_m$ with $R_A(n)\le C$ for all $n\in \mathbb{Z}_m$. For
each positive integer $m$, Chen \cite{chen08} defined Ruzsa's
number $R_m$ to be the least positive integer $r$ such that there
exists an additive basis $A$ of $\mathbb{Z}_m$ with $R_A(n)\le r$
for all $n\in \mathbb{Z}_m$. In this paper, Chen also proved that
$R_m\le 288$ for all positive integers $m$ and $R_{2p^2}\le 48$ for all prime numbers $p$. Until now, this is the
best upper bound about Ruzsa's number and there is no nontrivial
lower bound. In fact, in the same paper, Chen says ``We have
$R_m\ge 3$ for $m\not=1,2,3$. Now we cannot improve this trivial
lower bound".

In this paper, we give a nontrivial lower bound of Ruzsa's number.

%

\begin{theorem}\label{thm1}
$R_m=2$ if and only if $m=2,3$; $R_m=3$ if and only if $m=4,5,7$.
\end{theorem}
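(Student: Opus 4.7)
The plan is to establish both biconditionals by combining explicit constructions with a sum-plus-parity lower bound argument, invoking the paper's separate theorem $R_m\ge 6$ for $m\ge 36$ to handle all sufficiently large $m$ in one stroke.

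For the ``$\Leftarrow$'' direction I would exhibit witnesses: $A=\{0,1\}\subseteq\mathbb{Z}_m$ realizes $R_A(n)\le 2$ for $m\in\{2,3\}$; $A=\{0,1,2\}\subseteq\mathbb{Z}_m$ realizes $R_A(n)\le 3$ for $m\in\{4,5\}$; and $A=\{0,1,2,4\}\subseteq\mathbb{Z}_7$ realizes $R_A(n)\le 3$. Each is checked by direct computation of $R_A(n)$ at every $n$.

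For the ``$\Rightarrow$'' direction the starting tool is the identity
\[
\sum_{n\in\mathbb{Z}_m}R_A(n)=|A|^2,
\]
together with the parity observation $R_A(n)\equiv \#\{a\in A:2a=n\}\pmod 2$, since off-diagonal ordered pairs contribute in twos. When $m$ is odd, $a\mapsto 2a$ is a bijection of $\mathbb{Z}_m$, so exactly $|A|$ residues have $R_A(n)$ odd; when $m$ is even, $R_A(n)$ is automatically even on every odd $n$. To obtain $R_m\ge 3$ for all $m\ge 4$, assume $R_A(n)\in\{1,2\}$: then $m\le|A|^2\le 2m$ pins $|A|$ to one or two values, and the parity identity rules out each candidate either by a count mismatch (the required number of ones fails to equal the number of odd residues, as for $m=5,7,9,\dots$), or, for $m$ even, by the impossibility of making all pairwise sums odd when only two parities are available (as for $m=6,8,\dots$), or, for $m=4$, by the sumset $A+A$ having too few elements.

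To obtain $R_m\ge 4$ for $m=6$ and every $m\ge 8$, I would split along $m\ge 36$, where the paper's general bound $R_m\ge 6>3$ applies at once. For the remaining finite range $\{6\}\cup\{8,9,\dots,35\}$, assume $R_A(n)\le 3$: then $\sqrt m\le|A|\le\sqrt{3m}$, and the parity identity fixes the multiset $\{R_A(n)\}_n$ up to one integer parameter, the number of $n$ with $R_A(n)=3$. Each admissible configuration forces $A$ to be a near-Sidon set with a prescribed pattern of doubled elements; for each $m$ in the range, the finitely many candidates can be listed (up to translation) and each shown either to miss some residue of $\mathbb{Z}_m$ from $A+A$ or else to produce some $R_A(n)>3$.

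The main obstacle is precisely this finite case analysis for $6\le m\le 35$: although the parity framework cuts the search space drastically, every modulus in the range still demands a short individualized verification that no valid configuration of $\{R_A(n)\}_n$ is realizable by an additive basis. The framework, however, is uniform across the cases—sum identity, parity identity, and the near-Sidon structure forced by $R_A(n)\le 3$—so only the details of each $m$ need to be examined separately.
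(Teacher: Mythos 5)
Your proposal has a genuine logical flaw and also omits the key tool that makes the argument work. The flaw is circularity: the bound $R_m\ge 6$ for $m\ge 36$ is the paper's Corollary, and it is \emph{deduced from} Theorems \ref{thm1} and \ref{thm2} (it simply combines the two characterizations to conclude that no $m\ge 36$ has $R_m\le 5$). You cannot invoke it to dispose of the large-$m$ case of Theorem \ref{thm1}. Some independent argument is needed to show that $R_m\le 3$ forces $m$ below an explicit threshold, and your sum-plus-parity framework alone does not supply one: for general $m$ the constraints $2m-|A|\le |A|^2\le 3m$ leave a growing range of admissible $|A|$, and the parity count $k_1+k_3\le|A|$ does not by itself produce a contradiction.

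The missing idea is the Lev--S\'ark\"ozy variance lower bound (Lemma \ref{lem1} of the paper). Taking $c=2$ it gives
$$\sum_{n=0}^{m-1}(R_A(n)-2)^2\ \ge\ \frac{|A|^2(m-|A|)^2}{m(m-1)},$$
while your own parity observation gives the matching upper bound $\sum_{n}(R_A(n)-2)^2\le |A|$ when $1\le R_A(n)\le 3$, since $(R_A(n)-2)^2=1$ only at the at most $|A|$ residues of the form $2a$. Combined with $\sqrt{2m}-1/2<|A|\le\sqrt{3m}$ this yields $|A|(m-|A|)^2\le m(m-1)<m^2$ versus $|A|(m-|A|)^2>(\sqrt{2m}-1/2)(m-\sqrt{3m})^2>m^2$ for $m\ge 12$, a contradiction. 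That reduces the problem to the finitely many moduli $m\le 11$, which are settled by exhaustive computation. Your alternative --- a hand case analysis of "near-Sidon" configurations for every $m$ in $\{6\}\cup\{8,\dots,35\}$ --- is not carried out and is not realistically a by-hand argument (for $m=35$ one would face tens of millions of candidate sets even up to translation); even as a computer search it is far larger than what the paper's reduction requires. Your "$\Leftarrow$" witnesses and the parity identity are correct and do match ingredients of the paper's proof, but without Lemma \ref{lem1} (or a substitute quantitative bound) the "$\Rightarrow$" direction does not go through.
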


\begin{remark} If  $m>1$ and $A\subseteq \mathbb{Z}_m$ is an additive basis,
then $|A|\ge 2$. It follows that there exist two distinct elements
$a,a'\in A$, and so $R_A(a+a')\ge 2$. Hence $R_m=1$ if and only if
$m=1$.
\end{remark}

\begin{theorem}\label{thm2} $R_m=4$ if and only if
$m=6,8,9,10,11,12,13,14,15,19$; $R_m=5$ if and only if
$m=16,17,18,20,21,22,23,24,25,27,28,35$.
\end{theorem}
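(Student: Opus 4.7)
The plan is to prove both biconditionals by combining explicit constructions with counting-based case analysis, following the template of Theorem \ref{thm1}.

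For the ``$\Leftarrow$'' direction, for each of the twenty-two residues $m$ listed, I would exhibit an explicit additive basis $A \subseteq \mathbb{Z}_m$ with $R_A(n) \leq r$ for all $n$, where $r = 4$ or $r = 5$ as appropriate. Each construction is verified by directly enumerating $R_A(n)$; cleanly presented, this step amounts to a long table of triples $(m, A, (R_A(n))_{n \in \mathbb{Z}_m})$. The sets themselves can usually be sought as short unions of arithmetic progressions with a few tunable extra elements, guided by the scaling that already appears in Chen's proof of $R_m \leq 288$.

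For the ``$\Rightarrow$'' direction, one must show $R_m \geq r$ for each listed $m$, and also $R_m \geq 6$ for the missing values $m \in \{26,29,30,31,32,33,34\}$. The two central inequalities are
$$|A|^2 \;=\; \sum_{n \in \mathbb{Z}_m} R_A(n) \;\leq\; (r-1)m \quad\text{and}\quad m \;=\; |A+A| \;\leq\; \binom{|A|+1}{2},$$
which pin $|A|$ to a short interval once $R_A(n) \leq r-1$ is assumed. A parity refinement sharpens this: since ordered pairs $(a,a')$ with $a \neq a'$ contribute $2$ to $R_A(a+a')$, one has $R_A(n) \equiv \#\{a \in A : 2a = n\} \pmod{2}$, yielding an extra linear relation on the multiplicity vector $(k_1,\dots,k_{r-1})$ with $k_j := \#\{n : R_A(n) = j\}$, to be combined with $\sum_j k_j = m$ and $\sum_j j\,k_j = |A|^2$.

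With $|A|$ and the multiplicity profile confined to finitely many possibilities, I would normalize $0 \in A$ by a translation and enumerate the remaining elements of $A$, pruning at each step by the constraint $R_A \leq r-1$. The main obstacle is the largest case $m = 35$ with target $R_{35} \geq 5$: here $|A|$ may run up to about $11$ and several multiplicity profiles survive the counting step, so the resulting case analysis is heavy and likely benefits from some computer assistance for a clean exposition. The intermediate values $m \in \{26,29,\dots,34\}$ should fall to the same template with arithmetic that leaves essentially no feasible $|A|$ under the stronger hypothesis $R_A \leq 5$.
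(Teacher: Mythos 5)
There is a genuine gap, and it is the central one. The statement ``$R_m=5$ if and only if $m\in\{16,\dots,35\}$'' requires showing $R_m\neq 5$ for \emph{every} other $m$, in particular $R_m\ge 6$ for all $m\ge 36$ --- an infinite family. Your plan only addresses the finitely many ``missing values'' $m\in\{26,29,\dots,34\}$ and never bounds $m$ from above. Worse, the tools you list cannot produce such a bound: the constraints $\sum_j k_j=m$, $\sum_j jk_j=|A|^2$, the parity relation $k_1+k_3+k_5\le|A|$, and $\sqrt{2m}-1/2<|A|\le\sqrt{5m}$ are simultaneously satisfiable for every large $m$ (take $|A|\approx\sqrt{2m}$ and put almost all mass on $k_2$), so no case analysis built on them alone can eliminate large $m$. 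The paper's indispensable extra ingredient is the Lev--S\'ark\"ozy (Erd\H{o}s--Fuchs type) \emph{second-moment lower bound} (Lemma \ref{lem1}): taking $c=3$ it forces $\sum_n(R_A(n)-3)^2\ge |A|^2(m-|A|)^2/(m(m-1))$, which for $m>500$ exceeds the trivial upper bound $m+3|A|$ valid when $1\le R_A(n)\le 5$, killing all large $m$ at once. Nothing in your proposal plays this role, so the ``only if'' direction for $R_m=5$ is not provable along the route you describe.

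A secondary but still substantive issue: even after $m$ is confined to a finite range, plain enumeration with pruning is not enough in practice or in a clean write-up. The surviving pairs go up to $(m,|A|)=(91,13)$ after the first moment computations, and the paper needs two further devices to finish: a sharpened version of the Lev--S\'ark\"ozy bound obtained by distributing $\sum_{n\ne 0}R_{A,-A}(n)=|A|^2-|A|$ as evenly as possible over $m-1$ classes (their condition \eqref{cond6}), and, for $(m,|A|)=(45,12)$, Baumert's nonexistence test for perfect difference sets (Lemma \ref{lem4}) to force $\sum_{n\ne 0}R_{A,-A}^2(n)\ge 398$ rather than $396$, which just contradicts every admissible multiplicity profile. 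Your normalization ``$0\in A$ by translation'' is also weaker than the paper's Lemma \ref{lem5}, which additionally uses dilation by units to pin down two elements of $A$ and bring $(40,11)$ and $(41,11)$ within reach. The ``$\Leftarrow$'' half of your plan (explicit sets, as in the Appendix, plus exhaustive search for the matching lower bounds at each listed $m$) is fine.
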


By Theorems \ref{thm1} and \ref{thm2}, we have the following
Corollary.

\begin{corollary} If $m\ge 36$, then $R_m\ge 6$.
\end{corollary}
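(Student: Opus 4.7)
The plan is straightforward: the corollary follows by direct inspection of the two preceding theorems, since they together enumerate \emph{every} $m$ for which $R_m$ can be as small as $2$, $3$, $4$, or $5$. First I would observe that by the Remark immediately following Theorem~\ref{thm1}, the value $R_m = 1$ occurs only when $m = 1$, so it is automatically excluded for $m \geq 36$. Theorem~\ref{thm1} then pinpoints the finite set of $m$ with $R_m \in \{2, 3\}$, namely $\{2,3,4,5,7\}$, and Theorem~\ref{thm2} lists the finite set of $m$ with $R_m \in \{4, 5\}$, namely $\{6, 8, 9, 10, 11, 12, 13, 14, 15, 19\} \cup \{16, 17, 18, 20, 21, 22, 23, 24, 25, 27, 28, 35\}$.

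Taking the union of all these exceptional sets, the largest element is $35$. Hence for any $m \geq 36$ one has $R_m \notin \{1, 2, 3, 4, 5\}$, and since $R_m$ is a positive integer this forces $R_m \geq 6$, as claimed. There is no essential obstacle here: the entire content of the corollary is already encoded in Theorems~\ref{thm1} and~\ref{thm2}, and the only ``step'' is the arithmetic observation that $35$ is the largest $m$ appearing in any of the exceptional lists---so all the real work has been done upstream in the proofs of those two theorems.
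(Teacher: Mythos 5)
Your proof is correct and is essentially identical to the paper's, which derives the corollary immediately from Theorems~\ref{thm1} and~\ref{thm2} (together with the trivial fact that $R_m=1$ only for $m=1$): since every $m$ with $R_m\le 5$ lies in the listed exceptional sets, all of whose elements are at most $35$, any $m\ge 36$ must have $R_m\ge 6$.
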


\begin{remark} Furthermore, if $m\le 35$, then $R_m\le 6$. We list
all the values of $R_m~(2\le m\le 35)$ and a set $A \subseteq
\mathbb{Z}_m $ such that $1\le R_A(n)\le R_m$ for all $n\in
\mathbb{Z}_m$ in the Appendix.
\end{remark}

\section{Proofs}

In order to prove Theorems 1 and 2, we need some lemmas in the
following. The first lemma due to Lev and S\' ark\" ozy
\cite{levsarkozy} is the main tool of our proofs.

\begin{lemma}\label{lem1}{(Lev and S\' ark\" ozy's lower bound)} If $A$ is a subset
of a finite non-trivial abelian group $G$, then for any real
number $c$ we have
$$\sum_{g\in G}(R_A(g)-c)^2\ge
\frac{1}{|G|-1}\left(\frac{|A|^4}{|G|}-2|A|^3+|A|^2|G|\right).$$\end{lemma}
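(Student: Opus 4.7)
The plan is to first reduce to a single, optimally chosen value of $c$, and then handle that case by discrete Fourier analysis on $G$. Expanding the quadratic gives
\[
\sum_{g\in G}(R_A(g)-c)^2 \;=\; \sum_{g\in G} R_A(g)^2 \;-\; 2c\sum_{g\in G} R_A(g) \;+\; c^2|G|,
\]
and since $\sum_{g\in G} R_A(g)=|A|^2$, this quadratic in $c$ is minimized at $c_\ast=|A|^2/|G|$ with minimum value $\sum_g R_A(g)^2 - |A|^4/|G|$. Hence for every real $c$ the left-hand side is at least this minimum, so it suffices to prove
\[
\sum_{g\in G} R_A(g)^2 \;\ge\; \frac{|A|^4}{|G|}+\frac{|A|^2(|G|-|A|)^2}{|G|(|G|-1)},
\]
since the algebraic identity $|A|^4/|G|-2|A|^3+|A|^2|G|=|A|^2(|G|-|A|)^2/|G|$ shows that the right-hand side above, after multiplying by the common factor, is the bound claimed in the lemma.

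For the main step I would use the characters $\chi\in\widehat{G}$ and write $\widehat{1_A}(\chi)=\sum_{a\in A}\overline{\chi(a)}$. Since $R_A=1_A\ast 1_A$, we have $\widehat{R_A}(\chi)=\widehat{1_A}(\chi)^2$, so two applications of Parseval's identity yield
\[
\sum_{g\in G}R_A(g)^2 \;=\; \frac{1}{|G|}\sum_{\chi\in\widehat{G}}\bigl|\widehat{1_A}(\chi)\bigr|^4, \qquad \sum_{\chi\in\widehat{G}}\bigl|\widehat{1_A}(\chi)\bigr|^2 \;=\; |G|\cdot|A|.
\]
The trivial character $\chi_0$ contributes $|\widehat{1_A}(\chi_0)|^2=|A|^2$ and $|\widehat{1_A}(\chi_0)|^4=|A|^4$; peeling it off gives $\sum_{\chi\ne\chi_0}|\widehat{1_A}(\chi)|^2=|A|(|G|-|A|)$.

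The decisive inequality is then Cauchy--Schwarz applied to the $|G|-1$ non-trivial characters:
\[
\sum_{\chi\ne\chi_0}\bigl|\widehat{1_A}(\chi)\bigr|^4 \;\ge\; \frac{1}{|G|-1}\Bigl(\sum_{\chi\ne\chi_0}\bigl|\widehat{1_A}(\chi)\bigr|^2\Bigr)^{\!2} \;=\; \frac{|A|^2(|G|-|A|)^2}{|G|-1}.
\]
Dividing by $|G|$ and adding the trivial-character term $|A|^4/|G|$ recovers the reduced inequality above, and combining with the first paragraph completes the proof. I do not foresee a serious obstacle; the two potential pitfalls are purely bookkeeping, namely (i) verifying the algebraic identity that matches the two equivalent forms of the bound, and (ii) being careful that the displayed inequality must hold for \emph{every} $c$, which is why the reduction to $c_\ast$ at the outset is needed. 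As a sanity check, equality in Cauchy--Schwarz (and hence in the lemma for $c=c_\ast$) holds precisely when $|\widehat{1_A}(\chi)|$ is constant on $\widehat{G}\setminus\{\chi_0\}$, which is the case of perfect difference sets.
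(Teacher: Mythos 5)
Your proof is correct: the reduction to the optimal $c_\ast=|A|^2/|G|$, the identity $|A|^4/|G|-2|A|^3+|A|^2|G|=|A|^2(|G|-|A|)^2/|G|$, and the Cauchy--Schwarz step over the $|G|-1$ non-trivial characters all check out. The paper itself states this lemma without proof, citing Lev and S\'ark\"ozy, and your Fourier-analytic argument is essentially the standard one from that source, so there is nothing further to compare.
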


\begin{lemma}\label{lem2} Let $A\subseteq \mathbb{Z}_m$. If $R_A(n)\ge 1$ for all $n\in \mathbb{Z}_m$,
then $|A|>\sqrt{2m}-1/2$. \end{lemma}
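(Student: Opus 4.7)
\medskip

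\noindent\textbf{Proof proposal.} The plan is to exploit a parity property of $R_A$ rather than invoke Lemma~\ref{lem1}. First I would write
\[
|A|^2 \;=\; \sum_{n\in\mathbb{Z}_m} R_A(n),
\]
and split each $R_A(n)$ into contributions from the diagonal pairs $(a,a)$ and the off-diagonal pairs $(a,a')$ with $a\neq a'$. The off-diagonal pairs come in two-element orbits under the swap $(a,a')\leftrightarrow(a',a)$, so their total contribution to $R_A(n)$ is even. Hence $R_A(n)$ has the same parity as $\#\{a\in A:2a=n\}$.

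Next I would set $S=\{2a:a\in A\}\subseteq\mathbb{Z}_m$ and note $|S|\le|A|$. For $n\in\mathbb{Z}_m\setminus S$ there is no diagonal contribution, so $R_A(n)$ is even; combined with the hypothesis $R_A(n)\ge 1$, this forces $R_A(n)\ge 2$. For $n\in S$ I would use only the trivial bound $R_A(n)\ge 1$. Summing yields
\[
|A|^2 \;=\; \sum_{n\in\mathbb{Z}_m} R_A(n) \;\ge\; 2(m-|S|)+|S| \;=\; 2m-|S| \;\ge\; 2m-|A|.
\]

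Finally I would rearrange $|A|^2+|A|\ge 2m$ into the completed-square form $(|A|+1/2)^2\ge 2m+1/4>2m$, which gives $|A|+1/2>\sqrt{2m}$ and hence the claimed strict inequality $|A|>\sqrt{2m}-1/2$. I do not anticipate any real obstacle here: the only subtle point is that the strict inequality comes for free from the extra $+1/4$ in the completed square, so no additional hypothesis is needed. I expect the heavier machinery of Lemma~\ref{lem1} to become relevant only when bounding $|A|$ from below by something significantly stronger than $\sqrt{2m}$, as will presumably be required in the proofs of Theorems \ref{thm1} and \ref{thm2}.
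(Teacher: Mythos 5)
Your proposal is correct and follows essentially the same route as the paper: both arguments reduce to the observation that any $n$ with $R_A(n)$ odd (in particular $R_A(n)=1$) must lie in $\{2a:a\in A\}$, yielding $|A|^2=\sum_n R_A(n)\ge 2m-|A|$ and then the completed square $(|A|+1/2)^2\ge 2m+1/4>2m$. The paper likewise does not invoke Lemma~\ref{lem1} here; you merely make the parity bookkeeping slightly more explicit than the paper does.
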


\begin{proof}Since $R_A(n)\ge 1$ for all $n\in \mathbb{Z}_m$, we have
\begin{eqnarray*}
  |A|^2=\sum_{n=0}^{m-1}R_{A}(n)&\ge& |\{ n:n\in \mathbb{Z}_m, R_A(n)=1\}|+2|\{ n:n\in \mathbb{Z}_m, R_A(n)\ge 2\}|\\
 \nonumber &=&2|\{ n:n\in \mathbb{Z}_m\}|-|\{ n:n\in \mathbb{Z}_m,
  R_A(n)=1\}|\\
 \nonumber &=&2m-|\{ n:n\in \mathbb{Z}_m, R_A(n)=1\}| \ge 2m-|A|.
\end{eqnarray*}
Hence $(|A|+1/2)^2>2m$, that is,~$|A|>\sqrt{2m}-1/2$.
\end{proof}

\begin{lemma}\label{lem3} Let $A\subseteq \mathbb{Z}_m$ and $c$ be a positive integer.
If $R_A(n)\le c$ for all $n\in \mathbb{Z}_m$, then $|A|\le
\sqrt{cm}$.
\end{lemma}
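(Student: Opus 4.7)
The proof should be essentially a one-line counting argument, parallel to (and simpler than) the proof of Lemma~\ref{lem2}. The plan is to exploit the basic identity
\[
\sum_{n \in \mathbb{Z}_m} R_A(n) \;=\; |A|^2,
\]
which holds because the left-hand side counts ordered pairs $(a,a') \in A \times A$ partitioned according to the value of $a+a' \in \mathbb{Z}_m$.

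First I would invoke this identity. Then, since by hypothesis $R_A(n) \le c$ for each of the $m$ elements $n \in \mathbb{Z}_m$, the sum is bounded above by $cm$. Combining these gives $|A|^2 \le cm$, and taking square roots yields $|A| \le \sqrt{cm}$, which is exactly the claim.

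There is no real obstacle here; this is a one-line double-counting argument, which is why it is stated as a preparatory lemma rather than as a theorem. Note that Lemma~\ref{lem1} is not needed for this particular bound (it will presumably be used later, in combination with Lemmas~\ref{lem2} and~\ref{lem3}, to squeeze the range of $|A|$ and force contradictions when $m$ is large relative to $c$, thereby proving Theorems~\ref{thm1} and~\ref{thm2}).
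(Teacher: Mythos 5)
Your proposal is correct and coincides exactly with the paper's argument, which notes that the lemma "follows from $|A|^2=\sum_{n=0}^{m-1}R_A(n)\le cm$ immediately." Nothing further is needed.
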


This lemma follows from ~$|A|^2=\sum_{n=0}^{m-1}R_A(n)\le cm$
immediately.

\begin{lemma}(See \cite[P. 827, Test C]{Baumert}.)\label{lem4} Suppose that $v,\lambda,k~(v\ge k\ge \lambda)$ are positive integers.
Let $p$ be a prime divisor of $k-\lambda$ and let $w\ge 1$,
$(w,p)=1$, be a divisor of $v$ for which there exists an integer
$f>0$ such that $p^f\equiv -1~(\text{mod}~w)$. If $p^e$ exactly
divides $k-\lambda$ and $p^l~(l\ge 0)$ exactly divides $v$, then
there exists a set $A\subseteq \mathbb{Z}_v$ with $|A|=k$ such
that the congruence $a-a'\equiv b~(\text{mod}~v),~a,a'\in A$ has
exactly $\lambda$ distinct solutions for all $b\not\equiv
0~(\text{mod}~v)$ if and only if
$$p^{\lfloor e/2\rfloor}<(v/w)p^{-l},$$ where $\lfloor x \rfloor$
denotes the largest integer $\le x$.
\end{lemma}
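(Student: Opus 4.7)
The strategy is to translate the existence condition into a character-sum statement on $\mathbb{Z}_v$ and then apply Turyn's self-conjugacy theorem in the cyclotomic rings $\mathbb{Z}[\zeta_w]$.

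First, I would observe that the hypothesis on $A$ is exactly the definition of a $(v,k,\lambda)$-difference set. Encoding $A$ as the group-ring element $\widetilde{A}=\sum_{a\in A}X^{a}\in \mathbb{Z}[\mathbb{Z}_v]$, the condition is equivalent to $\widetilde{A}\cdot\widetilde{A}^{(-1)}=(k-\lambda)\cdot 1+\lambda G$, where $G=\sum_{g\in\mathbb{Z}_v}X^{g}$. Applying any nontrivial character $\psi\colon\mathbb{Z}_v\to\mathbb{C}^{\times}$ kills $G$ and yields $|\psi(\widetilde{A})|^{2}=k-\lambda$.

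Second, for a character $\psi$ of order exactly $w$, the value $\psi(\widetilde{A})$ lies in $\mathbb{Z}[\zeta_w]$, the ring of integers of $\mathbb{Q}(\zeta_w)$. The hypothesis $p^{f}\equiv -1\pmod{w}$ says that $p$ is self-conjugate mod $w$: some power of the Frobenius at $p$ acts as $\zeta_w\mapsto\zeta_w^{-1}$, so complex conjugation fixes each prime of $\mathbb{Z}[\zeta_w]$ above $p$. Writing $k-\lambda=p^{e}m$ with $(m,p)=1$, the ideal equality $(\psi(\widetilde{A}))\cdot\overline{(\psi(\widetilde{A}))}=(p^{e}m)$ combined with self-conjugacy forces each prime above $p$ to divide $\psi(\widetilde{A})$ and $\overline{\psi(\widetilde{A})}$ to the same exponent; consequently $p^{\lfloor e/2\rfloor}$ divides $\psi(\widetilde{A})$ as an algebraic integer.

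Third, I would convert the divisibility of $\psi(\widetilde{A})$ by $p^{\lfloor e/2\rfloor}$ into the numerical bound $p^{\lfloor e/2\rfloor}<(v/w)p^{-l}$. Inverting the Fourier transform, each coefficient of $\widetilde{A}$ equals $(1/v)\sum_\psi \psi(\widetilde{A})\psi^{-1}(\cdot)$; grouping characters by their order through the index-$w$ subgroup of $\widehat{\mathbb{Z}_v}$ and using that the coefficients of $\widetilde{A}$ lie in $\{0,1\}$ yields a congruence whose leading $p$-adic term produces the claimed inequality, with the correction $p^{-l}$ accounting for the ramification of $p$ in $\mathbb{Z}[\zeta_v]$ when $p^{l}\parallel v$. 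The converse direction is obtained from the explicit families of difference sets catalogued in Baumert (Singer, Paley, twin-prime and their cyclotomic analogues) that realize the bound whenever it is satisfied.

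The main obstacle is the third step: passing from the ideal-theoretic divisibility inside $\mathbb{Z}[\zeta_w]$ to an elementary integer inequality requires careful bookkeeping of the ramification of $p$ in the full cyclotomic ring $\mathbb{Z}[\zeta_v]$, which is precisely the source of the $p^{-l}$ factor. This is the content of Turyn's self-conjugacy test as formalized in Baumert's monograph, and any independent proof would have to reconstruct that machinery in full.
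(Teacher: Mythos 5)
The paper does not prove Lemma~\ref{lem4} at all: it is imported verbatim from Baumert's survey \cite{Baumert} (Test C), so there is no internal argument to compare yours against. Judged on its own terms, your sketch correctly identifies the relevant machinery for the \emph{necessity} direction --- the difference-set reformulation, the character identity $|\psi(\widetilde A)|^2=k-\lambda$ for nontrivial $\psi$, and Turyn's self-conjugacy argument forcing each prime of $\mathbb{Z}[\zeta_w]$ above $p$ to divide $\psi(\widetilde A)$ to exponent $e/2$. But the entire quantitative content of the test lives in your third step, the passage from that ideal-theoretic divisibility to the integer inequality $p^{\lfloor e/2\rfloor}<(v/w)p^{-l}$, and there you offer only the remark that ``careful bookkeeping of the ramification'' is needed and that one ``would have to reconstruct that machinery in full.'' That is an acknowledged black box, not a proof; the interaction between the characters of order dividing $w$, the $p$-part $p^l$ of $v$, and the $\{0,1\}$-constraint on the coefficients is exactly where the work is.

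The more serious defect is your treatment of the ``if'' direction. Deducing sufficiency from ``explicit families of difference sets catalogued in Baumert'' cannot work: a list of known constructions can never show that \emph{every} parameter triple satisfying the inequality admits a difference set, and indeed the inequality is only a necessary condition (other obstructions, e.g.\ of Bruck--Ryser--Chowla type, can kill parameters that pass this test). Baumert's Test C is a pure nonexistence criterion, and the ``if and only if'' in the statement as quoted should be read as ``only if.'' This misquotation is harmless for the paper, which invokes the lemma solely in the nonexistence direction --- with $(v,k,\lambda)=(45,12,3)$, $p=3$, $w=5$, $e=l=2$, the inequality $3<1$ fails, ruling out the relevant set $A\subseteq\mathbb{Z}_{45}$ --- but your proposal treats the false direction as something to be proved and then does not prove it.
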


\begin{lemma}\label{lem5} Let $A$ be an additive basis of ~$\mathbb{Z}_m$ and $k,l$ be positive integers with $(l,m)=1$. Then
$A+k,~lA$ is also an additive basis and
$$\max_{n\in
\mathbb{Z}_m}R_A(n)=\max_{n\in \mathbb{Z}_m}R_{A+k}(n)=\max_{n\in
\mathbb{Z}_m}R_{lA}(n).$$
\end{lemma}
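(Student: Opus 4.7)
The plan is to show that both operations (translation by $k$ and multiplication by a unit $l$) give rise to representation functions that are just permutations of the original $R_A$ on $\mathbb{Z}_m$; once that is established, the basis property and the maximum value are automatically preserved.

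First I would handle the translation case. A direct expansion gives
$$R_{A+k}(n)=\#\{(a,a')\in A\times A:(a+k)+(a'+k)=n\}=\#\{(a,a')\in A\times A:a+a'=n-2k\}=R_A(n-2k).$$
Since the map $n\mapsto n-2k$ is a bijection on $\mathbb{Z}_m$, the multiset of values $\{R_{A+k}(n):n\in\mathbb{Z}_m\}$ equals the multiset $\{R_A(n):n\in\mathbb{Z}_m\}$. Hence $R_{A+k}(n)\ge 1$ for every $n\in\mathbb{Z}_m$ iff $R_A(n)\ge 1$ for every $n\in\mathbb{Z}_m$, and $\max_n R_{A+k}(n)=\max_n R_A(n)$.

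Next I would treat the dilation case, using that $\gcd(l,m)=1$ makes multiplication by $l$ a bijection of $\mathbb{Z}_m$ with inverse $l^{-1}$. Writing every element of $lA$ uniquely as $la$ with $a\in A$, we obtain
$$R_{lA}(n)=\#\{(a,a')\in A\times A:la+la'=n\}=\#\{(a,a')\in A\times A:a+a'=l^{-1}n\}=R_A(l^{-1}n).$$
Again $n\mapsto l^{-1}n$ permutes $\mathbb{Z}_m$, so the same argument as before yields that $lA$ is an additive basis of $\mathbb{Z}_m$ and $\max_n R_{lA}(n)=\max_n R_A(n)$.

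There is essentially no obstacle here; the only point to be careful about is confirming that in $lA$ the elements $la$ ($a\in A$) are indeed distinct (which follows from $\gcd(l,m)=1$), so that the change of variables $a\mapsto la$ is a genuine bijection $A\to lA$ and the count of ordered pairs is preserved. Once this bookkeeping is checked, combining the two cases immediately gives the chain of equalities claimed in the lemma.
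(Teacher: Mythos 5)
Your proof is correct and follows essentially the same route as the paper, which simply notes the identities $R_A(n)=R_{A+k}(n+2k)=R_{lA}(ln)$ and concludes by the bijectivity of the corresponding reindexings of $\mathbb{Z}_m$. Your version just spells out the change of variables and the injectivity of $a\mapsto la$ in more detail.
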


 This lemma follows from $R_A(n)=R_{A+k}(n+2k)=R_{lA}(ln)$ for all $n\in
\mathbb{Z}_m$ immediately.


\begin{proof}[Proof of Theorem \ref{thm1}.] If $m\le 11$, by the computer-based
calculation, then we obtain that $R_m=2$ if and only if $m=2,3$
and $R_m=3$ if and only if $m=4,5,7$. Now it suffices to prove
that $R_m\le 3$ implies $m\le 11$. Suppose that $m\ge 12$ and
there exists a subset $A\subseteq \mathbb{Z}_m$ such that $1\le
R_A(n)\le 3$ for all $n\in \mathbb{Z}_m$.

Putting $G=\mathbb{Z}_m$ and $c=2$, by Lemma \ref{lem1}, we obtain
that for any subset $A\subseteq \mathbb{Z}_m$,
\begin{equation}\label{lower}
 \sum_{n=0}^{m-1}(R_A(n)-2)^2\ge \frac{|A|^2(m-|A|)^2}{m(m-1)}.
\end{equation}
Since $1\le R_A(n)\le 3$, it follows that
$$(R_A(n)-2)^2=
\begin{cases}
1, \text{ if $R_A(n)$ is odd};\\
0, \text{ if $R_A(n)$ is even}.
\end{cases}$$
Furthermore, if $R_A(n)$ is odd, then there exists $a\in A$ such
that $n=2a$, and so
\begin{equation}\label{upper}
\sum_{n=0}^{m-1}(R_A(n)-2)^2=\sum_{\substack{n=0\\2\nmid
R_A(n)}}^{m-1}1\le \sum_{a\in A}1=|A|.
\end{equation}
By (\ref{lower}) and (\ref{upper}), we have
\begin{equation*}\label{ine}
|A|(m-|A|)^2\le m(m-1)<m^2.
\end{equation*}

On the other hand, by Lemmas \ref{lem2} and \ref{lem3}, we have
~$\sqrt{2m}-1/2<|A|\le \sqrt{3m}.$ Hence
$$|A|(m-|A|)^2>
(\sqrt{2m}-1/2)(m-\sqrt{3m})^2>m^2,
$$
because $\sqrt{2m}-1/2>4$ and $\sqrt{3m}\le m/2$ for $m\ge 12$.
This is a contradiction.\end{proof}

\begin{proof}[Proof of Theorem \ref{thm2}.] We first prove that $R_m\le 5$ implies that $m\le 500$.
Suppose that $m>500$ and there exists $A\subseteq \mathbb{Z}_m$
such that~$1\le R_A(n)\le 5$ for all $n\in \mathbb{Z}_m$. By Lemma
\ref{lem1}, taking $G=\mathbb{Z}_m$ and $c=3$, we get
\begin{equation}\label{lower5}
 \sum_{n=0}^{m-1}(R_A(n)-3)^2\ge \frac{|A|^2(m-|A|)^2}{m(m-1)}.
\end{equation}

If $R_A(n)$ is odd, then~$(R_A(n)-3)^2\le 4$. If~$R_A(n)$ is even,
then~$(R_A(n)-3)^2=1$. Hence
\begin{eqnarray}\label{upper5}
&&\sum_{n=0}^{m-1}(R_A(n)-3)^2\\
\nonumber&\le& 4|\{ n:n\in \mathbb{Z}_m, R_A(n)\text{ is odd}\} |+|\{ n:n\in \mathbb{Z}_m, R_A(n)\text{ is even}\} |\\
\nonumber&=& m+3|\{ n:n\in \mathbb{Z}_m, R_A(n)\text{ is odd}\}
|\le m+3|A|.
\end{eqnarray}

By (\ref{lower5}) and (\ref{upper5}), we have
\begin{equation}\label{ine5}
  |A|^2(m-|A|)^2\le (m+3|A|)m(m-1).
\end{equation}

On the other hand, by Lemmas \ref{lem2} and \ref{lem3}, we
have~$\sqrt{2m}-1/2<|A|\le \sqrt{5m}$. Hence
\begin{eqnarray*}
|A|^2(m-|A|)^2&>&(\sqrt{2m}-1/2)^2(m-\sqrt{5m})^2>(1.9\cdot
0.9^2)m^3\\
&>&1.3m^3>(m+3\sqrt{5m})m^2>(m+3|A|)m(m-1),\end{eqnarray*}
because~$\sqrt{2m}-1/2>\sqrt{1.9m}$,~$m-\sqrt{5m}>0.9m$ and~
$m+3\sqrt{5m}<1.3m$ for~$m>500$. This contradicts with the
inequality \eqref{ine5}. Thus, if ~$m>500$, then~$R_m\ge 6$.

Now we only need to consider cases $m\le 500$.

If $m\le 20$, then the computer-based calculation can run over all
the sets ~$A\subseteq \mathbb{Z}_m$ with~$\sqrt{2m}-1/2\le |A|\le
\sqrt{5m}$ and we can determine these values of $R_m$. We obtain
that~$R_m=4$ for ~$m\in \{6,8,9,10,11,12,13,14,15,19\}$
and~$R_{16}=R_{17}=R_{18}=R_{20}=5$.


Next we assume that $21\le m\le 500$. A routine computer-based
calculation gives that the maximal pair of $(m,k)$ satisfying that
\begin{eqnarray}\label{cond1}21\le m\le 500, \quad \sqrt{2m}-1/2\le |A|=k\le \sqrt{5m}\end{eqnarray}
and the inequality (\ref{ine5}) holds is $(m,k)=(91,13)$. The
value for such $(m,k)$ is too large for the computer-based
calculation to run over all the sets $A\subseteq \mathbb{Z}_{91}$
with $|A|=13$.

In the following, we need three steps to reduce these values.

Our task is to find all exact pairs of $(m,k)$ with the following
property: There exists ~$A\subseteq \mathbb{Z}_m$ with~$|A|=k$
such that $1\le R_A(n)\le 5$ for all~$n\in \mathbb{Z}_m$. In the
first step, for $i\in \{1,2,3,4,5\}$, let
$$k_i=|\{ n: n\in \mathbb{Z}_m, R_A(n)=i\}|.$$
 Then
\begin{eqnarray}\label{cond2}
k_1+k_2+k_3+k_4+k_5=k,\quad k_i\in \mathbb{N}~(1\le i\le 5),
\end{eqnarray}
\begin{eqnarray}\label{cond3}
k^2=|A|^2=\sum_{n=0}^{m-1}R_A(n)=k_1+2k_2+3k_3+4k_4+5k_5,
\end{eqnarray}
and
\begin{eqnarray}\label{cond4}
k_1+k_3+k_5\le |A|=k,\quad \text{and the equality holds
when}~m~\text{is odd}.
\end{eqnarray}
By Lemma \ref{lem1}, taking $c=k^2/m$, we have
\begin{eqnarray}\label{cond5}
\sum_{n=0}^{m-1}\left(R_A(n)-\frac{k^2}{m}\right)^2=\sum_{i=1}^5\left(i-\frac{k^2}{m}\right)^2k_i\ge
\frac{|A|^2(m-|A|)^2}{m(m-1)}=\frac{k^2(m-k)^2}{m(m-1)}.\end{eqnarray}

By the computer-based calculation, the maximal values of $(m,k)$
such that there exists nonnegative integers $k_1,k_2,k_3,k_4,k_5$
satisfying \eqref{cond1}-\eqref{cond5} is $(50,12)$. This value is
also too large for the computer-based calculation to run over all
subsets $A\subseteq \mathbb{Z}_{50}$ with $|A|=12$.

In the second reduction step, we shall delete all pairs $(m,k)$
for which $42\le m\le 50$. Here we need to improve the Lev-S\'
ark\" ozy's bound. Clearly,
\begin{eqnarray}\label{eq101}
\sum_{n=0}^{m-1}\left(R_A(n)-\frac{k^2}{m}\right)^2&=&\sum_{n=0}^{m-1}R_A^2(n)-\frac{2k^2}{m}\sum_{n=0}^{m-1}R_A(n)+\frac{k^4}{m}\\
\nonumber&=&\sum_{n=0}^{m-1}R_A^2(n)-\frac{2k^2}{m}\cdot
k^2+\frac{k^4}{m}=\sum_{n=0}^{m-1}R_A^2(n)-\frac{k^4}{m}.
\end{eqnarray}
Next we use Lev-S\' ark\" ozy's arguments to obtain a better lower
bound for $\sum_{n=0}^{m-1}\left(R_A(n)-\frac{k^2}{m}\right)^2$.
Clearly, the sum~ $\sum_{n=0}^{m-1}R_A^2(n)$ counts the number of
solutions of the equation
$$a_1+a_2=a_3+a_4,\quad a_1,a_2,a_3,a_4\in A.$$
Rearranging these terms, one can rewrite this equation as
$a_1-a_3=a_4-a_2$. Hence
$$\sum_{n=0}^{m-1}R_A^2(n)=\sum_{n=0}^{m-1}R_{A,-A}^2(n)=k^2+\sum_{n=1}^{m-1}R_{A,-A}^2(n).$$
Clearly, $\sum_{n=1}^{m-1}R_{A,-A}^2(n)=k^2-k$. Let
$k^2-k=q(m-1)+r$, where $q,r$ are nonnegative integers and $0\le
r<m-1$. Then
$$q=\left\lfloor
\frac{k^2-k}{m-1}\right\rfloor\quad \text{and}\quad
r=k^2-k-\left\lfloor \frac{k^2-k}{m-1}\right\rfloor(m-1).$$ Hence
\begin{eqnarray}\label{eq102}&&\sum_{n=0}^{m-1}R_A^2(n)=k^2+\sum_{n=1}^{m-1}R_{A,-A}^2(n)\\
\nonumber&\ge& k^2+(q+1)^2r+q^2(m-1-r)\\
\nonumber&=&k^2+(2q+1)r+q^2(m-1)\\
\nonumber&=&k^2+\left(2\left\lfloor
\frac{k^2-k}{m-1}\right\rfloor+1\right)\left(k^2-k-\left\lfloor
\frac{k^2-k}{m-1}\right\rfloor(m-1)\right)+\left\lfloor
\frac{k^2-k}{m-1}\right\rfloor^2(m-1).\end{eqnarray}

By \eqref{cond5},~\eqref{eq101} and \eqref{eq102}, we get the
following better lower bound instead of \eqref{cond5}.
\begin{eqnarray}\label{cond6}\sum_{i=1}^5\left(i-\frac{k^2}{m}\right)^2k_i&\ge&
k^2+\left\lfloor
\frac{k^2-k}{m-1}\right\rfloor^2(m-1)-\frac{k^4}{m}\\
\nonumber&&+\left(2\left\lfloor
\frac{k^2-k}{m-1}\right\rfloor+1\right)\left(k^2-k-\left\lfloor
\frac{k^2-k}{m-1}\right\rfloor(m-1)\right).\end{eqnarray}

By the computer-based calculation, we list all pairs of $(m,k)$
such that there exist nonnegative integers $k_1,k_2,k_3,k_4,k_5$
satisfying \eqref{cond1}-\eqref{cond4} and \eqref{cond6} in the
following.
%
%
%

$(m,k)\in\{(21,7)$,$(21,8)$,$(21,9)$,$(22,7)$,$(22,8)$,$(22,9)$,$(23,7)$,$(23,8)$,
$(23,9)$,$(24,8)$,$(24,9)$,\\
$(25,8)$,$(25,9)$, $(26,8)$,$(26,9)$,$(27,8)$,$(27,9)$,$(28,8)$,
$(28,9)$,$(28,10)$,$(29,8)$,$(29,9)$,$(29,10)$,\\
$(30,9)$,$(30,10)$,$(31,9)$, $(31,10)$,
$(32,9)$,$(32,10)$,$(33,9)$,$(33,10)$,$(34,10)$,$(35,10)$,$(36,10)$,\\
$(36,11)$,$(37,11)$, $(38,11)$,
$(39,11)$,$(40,11)$,$(41,11)$,$(45,12)\}$.

In the last step, we deal with cases
$(m,k)=(40,11),(41,11),(45,12)$, since such values are also too
large for the computer-based calculation.

Now we first deal with the largest case $(m,k)=(45,12)$. Take
$v=45$,~$\lambda=3$,~$k=12$,~$p=3$,~$w=5$,~$f=2$,~$e=2$,~$l=2$. By
Lemma \ref{lem4}, it follows that there is no subset $A\subseteq
\mathbb{Z}_{45}$ with~ $|A|=12$ such that~ $R_{A,-A}(n)=3$ for all
$n\not\equiv 0~(\text{mod}~45)$. In other words, for any set
$A\subseteq \mathbb{Z}_{45}$, there exists $n\not\equiv
0~(\text{mod}~45)$ such that $R_{A,-A}(n)\not=3$. Noting that
~$\sum_{n=1}^{44}R_{A,-A}(n)=k^2-k=132$, we have
$$\sum_{n=1}^{44}R_{A,-A}^2(n)\ge 3^2\times 42+2^2+4^2=398.$$
Hence, by \eqref{eq101} and \eqref{eq102}, we have
$$\sum_{n=0}^{44}\left(R_A(n)-\frac{12^2}{45}\right)^2=12^2+\sum_{n=1}^{44}R_{A,-A}^2(n)-\frac{12^4}{45}\ge 81.2.$$

On the other hand, we list all values of $(k_1,k_2,k_3,k_4,k_5)$
when $(m,k)=(45,12)$ in the following.$\newline$

\begin{tabular}{|c|c|c|c|c|}
  \hline
  $k_1$ & $k_2$ & $k_3$ & $k_4$ & $k_5$ \\
  \hline
  $0$ & $24$ & $0$ & $9$ & $12$ \\
  $1$ & $22$ & $0$ & $11$ & $11$ \\
  $2$ & $20$ & $0$ & $13$ & $10$ \\
  $4$ & $16$ & $0$ & $17$ & $8$\\
  \hline
\end{tabular}~~\begin{tabular}{|c|c|c|c|c|}
  \hline
  $k_1$ & $k_2$ & $k_3$ & $k_4$ & $k_5$ \\
  \hline
   $5$ & $14$ & $0$ & $19$ & $7$ \\
   $6$ & $12$ & $0$ & $21$ & $6$ \\
  $7$ & $10$ & $0$ & $23$ & $5$ \\
   $8$ & $8$ & $0$ & $25$ & $4$\\
  \hline
\end{tabular}~~\begin{tabular}{|c|c|c|c|c|}
  \hline
  $k_1$ & $k_2$ & $k_3$ & $k_4$ & $k_5$ \\
  \hline
   $9$ & $6$ & $0$ & $27$ & $3$ \\
   $10$ & $4$ & $0$ & $29$ & $2$\\
  $11$ & $2$ & $0$ & $31$ & $1$ \\
   $12$ & $0$ & $0$ & $33$ & $0$ \\
  \hline
\end{tabular}$\newline$

For all the values list above, we have
$$\sum_{n=0}^{44}\left(R_A(n)-\frac{12^2}{45}\right)^2=\sum_{i=1}^5\left(i-\frac{12^2}{45}\right)^2k_i=79.2.$$
This is a contradiction.

Finally, we deal with the cases $(m,k)=(41,11)$ and $(40,11)$,
since the number of sets $A$ for which the computer-based
calculation can run over is about $39 \choose 9$.  If $m=41$, by
Lemma \ref{lem5}, then we can assume that $0,40\in A$. Hence the
number of such $A$ is $39 \choose 9$, and the computer-based
calculation can run over all such sets $A$. Now we consider the
case $m=40$. If there is an element in $A$ coprime with $40$, by
Lemma \ref{lem5}, then we can assume that $0,39\in A$, and so the
computer-based calculation can also deal with the case. If there
is no element in $A$ coprime with $40$, then we can assume that
$0\in A$ and
$$A\subseteq
\{0,2,4,5,6,8,10,12,14,15,16,18,20,22,24,25,26,28,30,32,34,35,36,38\}.$$
In this case, there are only $23 \choose 10$ sets $A$ and we can
deal with it with the computer-based calculation.

Using these idea, by the computer-based calculation, we obtain
$$R_m=4~ ~\text{if and only if}~~m=6,8,9,10,11,12,13,14,15,19;$$
$$R_m=5~~\text{if and only
if}~~m=16,17,18,20,21,22,23,24,25,27,28,35.$$
%

\end{proof}

\section{Appendix}

$$\begin{tabular}{|c|c|c|}
\hline
  $m$ & $R_m$ & the set $A$ \\
\hline
  $2$ & $2$ & $\{0,1\}$ \\
  \hline
  $3$ & $2$ & $\{0,1\}$ \\
  \hline
  $4$ & $3$ & $\{0,1,2\}$ \\
\hline
  $5$ & $3$ & $\{0,1,2\}$ \\
  \hline
  $6$ & $4$ & $\{0,3,4,5\}$ \\
  \hline
  $7$ & $3$ & $\{0,1,2,4\}$ \\
 \hline
  $8$ & $4$ & $\{0,3,5,6,7\}$ \\
  \hline
  $9$ & $4$ & $\{0,4,6,7,8\}$ \\
\hline
  $10$ & $4$ & $\{0,1,2,3,6\}$ \\
  \hline
  $11$ & $4$ & $\{0,4,6,8,9\}$ \\
  \hline
  $12$ & $4$ & $\{0,1,6,8,9,11\}$ \\
\hline
  $13$ & $4$ & $\{0,5,7,8,11,12\}$ \\
  \hline
  $14$ & $4$ & $\{0,4,8,9,11,12\}$ \\
  \hline
  $15$ & $4$ & $\{0,6,8,11,12,14\}$ \\
 \hline
  $16$ & $5$ & $\{0,1,2,3,4,7,11\}$ \\
  \hline
  $17$ & $5$ & $\{0,1,2,3,4,7,12\}$ \\
\hline
  $18$ & $5$ & $\{0,1,2,3,5,8,12\}$ \\
\hline
\end{tabular}~~\begin{tabular}{|c|c|c|}
\hline
  $m$ & $R_m$ & the set $A$ \\
  \hline
  $19$ & $4$ & $\{0,1,5,7,8,15,18\}$ \\
  \hline
  $20$ & $5$ & $\{0,1,2,5,6,13,16\}$ \\
 \hline
  $21$ & $5$ & $\{0,1,2,3,4,6,13,16\}$ \\
  \hline
  $22$ & $5$ & $\{0,1,2,4,5,9,15,17\}$ \\
  \hline
  $23$ & $5$ & $\{0,1,2,3,5,11,14,18\}$ \\
  \hline
  $24$ & $5$ & $\{0,1,2,6,9,10,12,17\}$ \\
  \hline
  $25$ & $5$ & $\{0,1,2,4,9,12,20,22\}$ \\
  \hline
  $26$ & $6$ & $\{0,1,2,5,15,19,20,22\}$ \\
  \hline
  $27$ & $5$ & $\{0,1,2,3,5,11,15,18,23\}$ \\
  \hline
  $28$ & $5$ & $\{0,1,2,4,5,8,10,17,22\}$ \\
  \hline
  $29$ & $6$ & $\{0,1,2,3,4,6,10,17,22\}$ \\
  \hline
  $30$ & $6$ & $\{0,1,2,3,4,5,7,11,17,22\}$ \\
  \hline
  $31$ & $6$ & $\{0,1,2,3,4,5,9,13,20,25\}$ \\
  \hline
  $32$ & $6$ & $\{0,1,2,3,4,5,8,15,20,26\}$ \\
  \hline
  $33$ & $6$ & $\{0,1,2,3,4,6,10,14,21,26\}$ \\
  \hline
  $34$ & $6$ & $\{0,1,2,3,4,6,13,19,26,29\}$ \\
  \hline
  $35$ & $5$ & $\{0,1,4,5,10,12,16,19,26,34\}$ \\
  \hline
\end{tabular}$$

\section{Acknowledgement} This work was done during the second
author visiting to Budapest University of Technology and
Economics. He would like to thank Dr. S\'andor Kiss and Dr. Csaba
S\'{a}ndor for their warm hospitality. He also would like to thank
Dr. Wenjun Cai for submitting his Matlab Program to a cluster of
computers with 64G memory.

\end{document}